\def\qed{\hfill {\hbox{${\vcenter{\vbox{               
   \hrule height 0.4pt\hbox{\vrule width 0.4pt height 6pt
   \kern5pt\vrule width 0.4pt}\hrule height 0.4pt}}}$}}}
\newtheorem{theorem}{Theorem}
\newtheorem{corollary}[theorem]{Corollary}
\theoremstyle{definition}
\newtheorem{example}{Example}
\newtheorem{definition}{Definition}
\date{}
\title{\Large \textbf{Virtual Tribrackets}}
\author{Sam Nelson\footnote{Email: Sam.Nelson@cmc.edu. Partially supported by Simons Foundation collaboration grant 316709.}\and
Shane Pico\footnote{Email: SPico18@students.claremontmckenna.edu}}
\begin{document}
\maketitle

\begin{abstract}
We introduce \textit{virtual tribrackets}, an algebraic structure for coloring
regions in the planar complement of an oriented virtual knot or link diagram. 
We use these structures to define counting invariants of virtual knots and links
and provide examples of the computation of the invariant; in particular we 
show that the invariant can distinguish certain virtual knots.
\end{abstract}

\parbox{6in} {\textsc{Keywords:} biquasiles, virtual biquasiles, tribrackets, 
virtual tribrackets, virtual knots and links

\smallskip

\textsc{2010 MSC:} 57M27, 57M25}

\section{\large\textbf{Introduction}}\label{I}

In \cite{dsn2}, algebraic structures known as \textit{biquasiles} were
introduced and used to define invariants of knots and links via vertex
colorings of \textit{dual graph diagrams} which uniquely determine
oriented knot diagrams on the sphere. In \cite{cnn}, biquasile counting 
invariants were enhanced with Boltzmann weights analogously to quandle
2-cocycle invariants, and in \cite{KN} biquasile colorings and Boltzmann 
enhancements were used to distinguish oriented surface-links in 
$\mathbb{R}^4$.

In \cite{MN}, \textit{ternary quasigroups} were used to define invariants 
of knots and links. While the details of the relationship between ternary 
quasigroups and biquasiles are currently under investigation, it is easy
to see that a biquasile defines a ternary quasigroup and that ternary
quasigroups satisfying certain conditions give rise to biquasiles.

Virtual knots and links were introduced in \cite{K} and have been much
studied in the years since \cite{KB}. 
As detailed in \cite{CKS,KK,K} etc., a virtual knot can be understood
geometrically as an equivalence class of knots in thickened orientable
surfaces up to stabilization, i.e. adding or removing handles in the complement
of the knot on the surface. One fundamental difference between coloring knots
with tribrackets as opposed to \textit{a prior} similar 
coloring structures such as quandles and biquandles arise when we consider the
case of virtual knots and links. For quandles and biquandles, where the colors
are attached to the arcs or semiarcs, we can simply ignore virtual crossings
with no problem. Any attempt to do this with tribrackets by coloring regions
in the surface complement of a virtual knot is rendered impossible by 
stabilization: any two regions can be connected by a handle, requiring the 
same color.

\[\scalebox{0.8}{\includegraphics{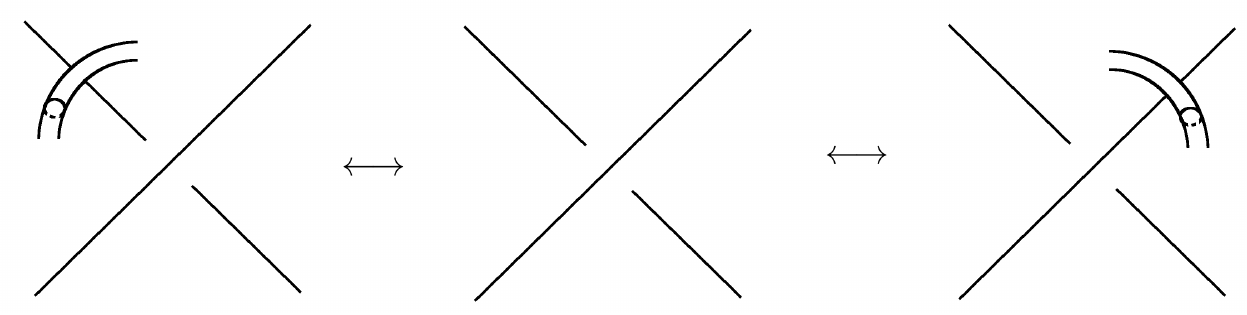}}\]

For this reason, we must include a second tribracket operation at the virtual 
crossings, analogous to \textit{virtual biquandles} defined in \cite{KM},
algebraic structures motivated by virtual Reidemeister moves with distinct 
operations at virtual crossings and at classical crossings.
In this paper we introduce \textit{virtual tribrackets}, a coloring structure
for oriented virtual knots and links generalizing biquasiles and ternary 
quasigroups by defining distinct operations at classical and virtual crossings.
The paper is organized as follows. In Section \ref{VT} we
define virtual tribrackets and identify some examples. In Section \ref{VK}
we use virtual tribrackets to define new counting invariants of virtual 
knots and links. We provide examples to illustrate the method of computation
of the invariant and compute the invariant via \texttt{python} code for 
the virtual knots in the table at \cite{KA} for a few example virtual 
tribrackets. In Section \ref{Q} we conclude with some open questions for 
future research.

\section{\large\textbf{Virtual Tribrackets}}\label{VT}

We begin with a definition adapted from an equivalent definition in \cite{MN}.

\begin{definition}
Let $X$ be a set. A \textit{(vertical) Niebrzydowski tribracket} on $X$ is 
a ternary operation $[,,]:X\times X\times X\to X$ satisfying the conditions
\begin{itemize}
\item[(i)] In the equation
\[\begin{array}{rcl}
[a,b,c] & = & d \\
\end{array}\] 
any three of the four elements $\{a,c,b,d\}$ determine the fourth.
\item[(ii)] For all $a,b,c,d\in X$ we have
\[\begin{array}{rcll}
{[}a,b,[b,c,d]] & = & [a,[a,b,c],[[a,b,c],c,d]] & (iii.i) \\
{[}[a,b,c],c,d] & = & [[a,b,[b,c,d]],[b,c,d],d] & (iii.ii).
\end{array}\]
\end{itemize}
For brevity we will refer to this structure as a \textit{tribracket}.
\end{definition}

The motivation for the tribracket axioms is to make the set of colorings 
of the regions in the planar complement of an oriented knot or link diagram
using the coloring rule below correspond bijectively before and after 
Reidemeister moves.
\[\includegraphics{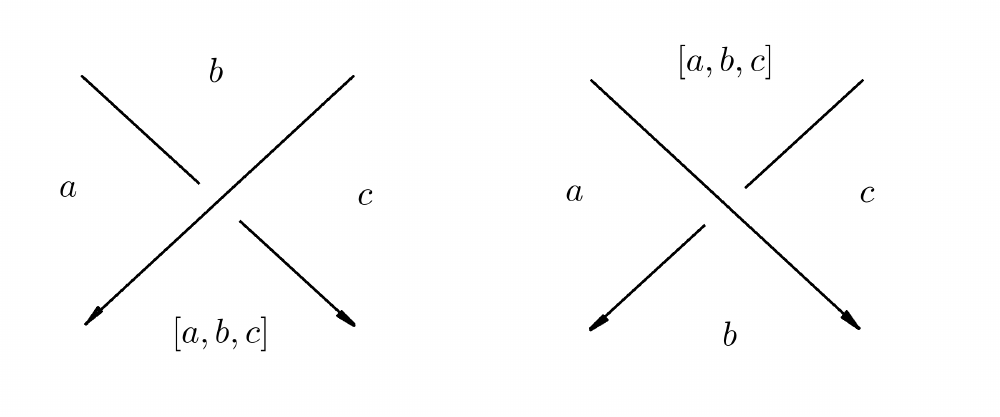}\]
The existence and uniqueness of colorings on both sides of Reidemeister
I and II moves is satisfied by condition (i), while condition (ii) follows 
from the oriented Reidemeister III move below, which together with the
eight possible oriented Reidemeister I and II moves which form a 
generating set of oriented Reidemeister moves.
\[\includegraphics{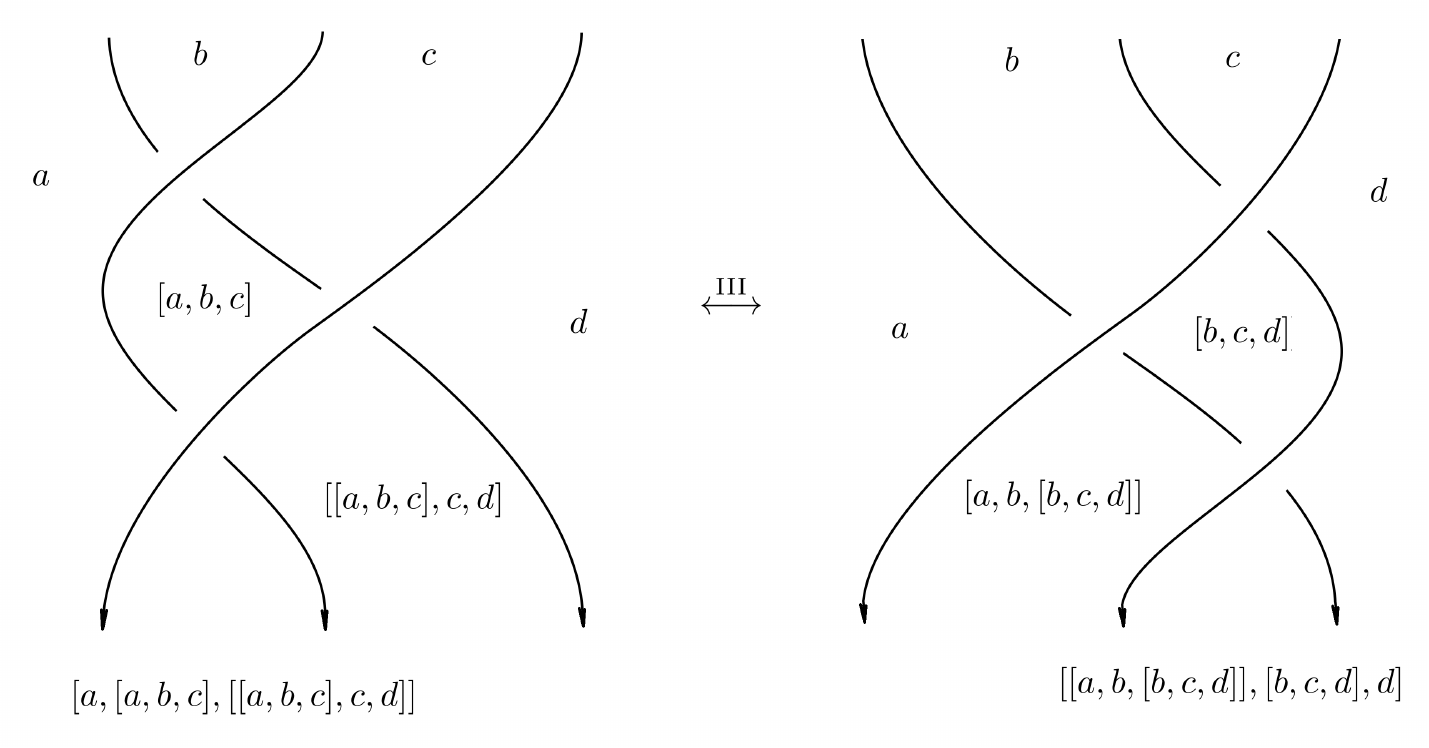}\]

We then have the following result (see also \cite{dsn2,MN}):
\begin{theorem}
Let $X$ be a set with a tribracket. The number $\Phi^{\mathbb{Z}}_X$ of 
tribracket colorings of an oriented knot diagram is an integer-valued 
invariant of oriented knots.
\end{theorem}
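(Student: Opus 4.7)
The plan is to show that for any oriented knot or link $K$, any two diagrams of $K$ differ by a finite sequence of planar isotopies and oriented Reidemeister moves, so it suffices to exhibit, for each move in a generating set of oriented Reidemeister moves, a bijection between the sets of tribracket colorings of the two local diagrams that fixes the colors on the boundary regions. Once such bijections are established, gluing them into the unchanged portion of the diagram yields a bijection between the full coloring sets, so the cardinality $\Phi_X^{\mathbb{Z}}$ is preserved.

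First I would fix notation: a coloring of $D$ is a function from the set of regions of $D$ in $S^2$ (equivalently, in $\mathbb{R}^2$ with a chosen unbounded region) to $X$ such that at every classical crossing, the four surrounding region colors $a,b,c,d$ satisfy $[a,b,c]=d$ according to the local rule from the excerpt. For planar isotopy the coloring set is literally unchanged, so only the Reidemeister moves require argument.

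For each of the eight oriented Reidemeister I and II moves, the local picture introduces one or two new regions bounded by strands whose exterior region colors are prescribed. Axiom (i), which asserts that any three of $\{a,b,c,d\}$ in $[a,b,c]=d$ determine the fourth, is exactly the statement that each new region receives a unique forced color, and that any coloring of the simpler side extends uniquely to a coloring of the more complex side. This gives a bijection between local colorings preserving the boundary colors, for each of the R1 and R2 variants. I would work out one or two representative cases explicitly to show how axiom (i) is invoked, and note that the remaining cases are analogous.

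For the oriented Reidemeister III move pictured in the excerpt, label the six boundary region colors and the one interior region color on each side. Applying the tribracket rule at each of the three crossings produces a system of equations expressing all interior colors and the shared boundary colors in terms of a common triple of inputs; equating the two expressions for the corresponding output regions on the two sides of the move yields precisely the identities (iii.i) and (iii.ii) of axiom (ii). Thus a coloring of one side extends uniquely to a coloring of the other with matching boundary, giving the required bijection. The main obstacle is bookkeeping in this step: one must carefully assign the six region variables consistently with the orientations in the diagram and verify that the two equations produced match the two clauses of axiom (ii) exactly. Once this is done, combining the local bijections over all moves shows that $\Phi_X^{\mathbb{Z}}$ is an invariant of the oriented knot or link. \qed
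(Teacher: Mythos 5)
Your proposal is correct and matches the paper's own (largely implicit) argument: the paper likewise reduces invariance to checking a generating set of oriented Reidemeister moves, invoking axiom (i) for the existence and uniqueness of extended colorings under the R1 and R2 moves and axiom (ii) as exactly the consistency conditions arising from the oriented R3 move. No substantive difference in approach.
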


\begin{example}
Let $R$ be any commutative ring and let $x,y\in R^{\times}$ be units in $R$.
Then the ternary operation 
\[[a,b,c]=xa-xyb+yc\]
defines an tribracket called an \textit{Alexander tribracket}. 
These are related to the \textit{Alexander Biquasiles} defined in \cite{dsn2} 
by $x=dn$ and $y=sn$.
\end{example}

In \cite{K}, classical knot theory was extended to \textit{virtual knot theory}
containing classical knot theory as a subset. A virtual knot diagram includes
classical crossings as well as \textit{virtual crossings}, drawn as a circled 
transverse intersections with no over or under information. These virtual
crossings can be regarded as representing genus in the supporting surface 
of the knot, which is defined up to stabilization moves on the surface 
\cite{KK}. A \textit{virtual knot} is an equivalence class 
of virtual knot diagrams under the following \textit{virtual Reidemeister 
moves}, together with the classical Reidemeister moves:

\[\includegraphics{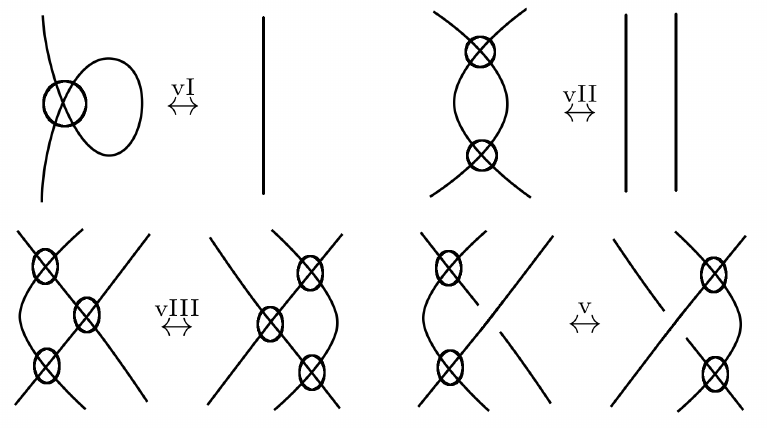}\]

We will be interested in oriented virtual knots and links. It is 
straightforward to show that the oriented moves below, together with 
the previously mentioned classical Reidemeister moves, form 
a generating set for the set of all oriented virtual moves.

\[\includegraphics{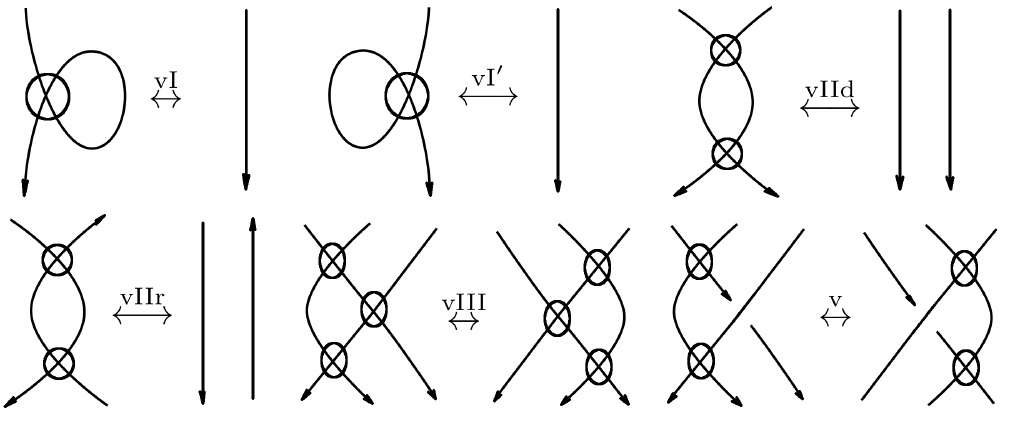}\]

With these moves in mind, we will define a \textit{virtual tribracket}
structure with following coloring rules:

\[\includegraphics{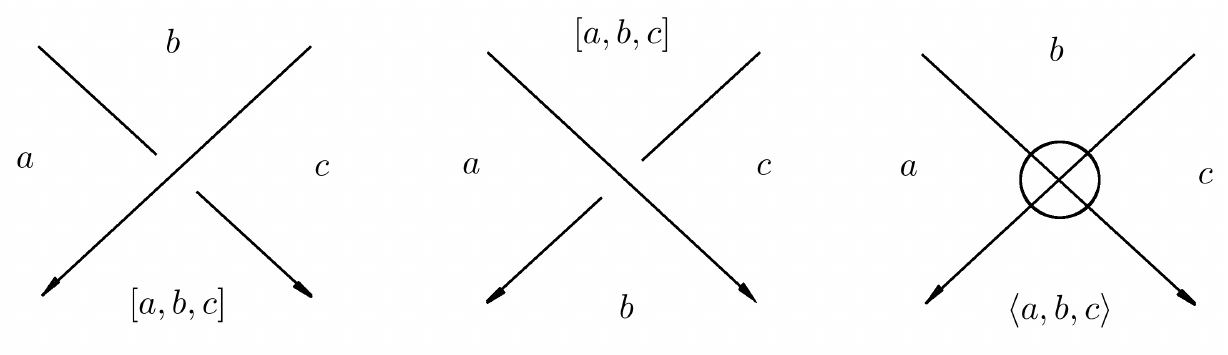}\]

\begin{definition}
Let $X$ be a set. A \textit{virtual tribracket} structure on $X$ is a pair of
ternary operations $[,,]:X\times X\times X\to X$ and
$\langle,,\rangle:X\times X\times X\to X$ satisfying the conditions
\begin{itemize}
\item[(i)] In the equations
\[[a,b,c]  =  d \quad\mathrm{and} \quad \langle a,b,c\rangle  = d 
\] 
any three of the four elements $\{a,c,b,d\}$ determine the fourth.
\item[(ii)] For all $a,b,c\in X$ we have
\[\begin{array}{ccll}
\langle a,\langle a,b,c\rangle, c\rangle & = &  b & (vii)\\
{[}a,b,[b,c,d]] & = & [a,[a,b,c],[[a,b,c],c,d]] & (iii.i)\\
{[}[a,b,c],c,d] & = & [[a,b,[b,c,d]],[b,c,d],d] & (iii.ii)\\
\langle a,b,\langle b,c,d\rangle\rangle 
& = & \langle a,\langle a,b,c\rangle,\langle \langle a,b,c\rangle,c,d\rangle\rangle & (viii.i) \\
\langle \langle a,b,c\rangle,c,d\rangle 
& = & \langle\langle a,b,\langle b,c,d\rangle\rangle,\langle b,c,d\rangle,d\rangle & (viii.ii)\\
{[} a,b,\langle b,c,d\rangle] 
& = & \langle a,\langle a,b,c\rangle,[ \langle a,b,c\rangle,c,d ]\rangle & (v.i) \\
{[} \langle a,b,c\rangle,c,d ] & = & \langle [ a,b,\langle b,c,d]\rangle,\langle b,c,d\rangle,d\rangle & (v.ii)\\
\end{array}\]
\end{itemize}
\end{definition}

We then have

\begin{theorem}\label{main}
A virtual tribracket coloring of an oriented virtual link diagram before 
an oriented classical or virtual Reidemeister move determines a unique
coloring of the diagram after the move agreeing outside the neighborhood 
of the move.
\end{theorem}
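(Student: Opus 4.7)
The plan is to verify the statement move-by-move over the generating set of oriented Reidemeister moves identified in the excerpt: the eight oriented classical R1 and R2 moves, the oriented classical R3 move, and the oriented virtual moves displayed above. For each move I would label the unbounded regions of the local picture with arbitrary elements of $X$ (subject only to whatever constraints are imposed from outside the neighborhood), use the two coloring rules $[a,b,c]=d$ and $\langle a,b,c\rangle=d$ to compute the induced labels of the interior regions on each side, and check that there is exactly one consistent coloring on each side with the two colorings agreeing on the boundary.

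The R1 and R2 cases, both classical and virtual, all reduce to solving a single equation of the form $[a,b,c]=d$ or $\langle a,b,c\rangle=d$ for one of the four entries in terms of the other three; these are handled uniformly by axiom (i), exactly as in the classical tribracket proof sketched around the coloring rule figure. The one additional virtual constraint comes from the oriented virtual R2 move, where applying $\langle,,\rangle$ forward and then backward at a virtual crossing must recover the middle color; this is precisely what axiom (vii) asserts.

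The content of the theorem lies in the four R3-type moves. For the classical oriented R3 move, labelling the six external regions and computing the three interior regions in the two configurations gives two equations: matching the central region yields axiom (iii.i), and matching a second interior region yields (iii.ii). Doing exactly the same computation with three virtual crossings in place of three classical crossings yields (viii.i) and (viii.ii). The mixed R3 move, in which a classical strand is detoured across a virtual crossing (so that each side has one classical crossing and two virtual crossings), yields the mixed axioms (v.i) and (v.ii), where $[,,]$ and $\langle,,\rangle$ appear in complementary positions on the two sides.

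The main obstacle is not any single algebraic step but the combinatorial bookkeeping: for each move one must consistently orient the strands, choose a convention for which region is the ``first'' argument of the bracket, and track how every interior region inherits its label from its three neighboring regions under the coloring rule. Once the labels are written down, the required identity on each side is literally one of the displayed axioms, so the bijection is immediate. Because the list of oriented moves cited in the excerpt is already a generating set, no further cases need to be checked, which completes the verification.
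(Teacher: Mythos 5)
Your proposal is correct and follows essentially the same route as the paper: a case-by-case check over the generating set of oriented moves, with axiom (i) handling the type I and II moves, (vii) handling the virtual II moves, and the three R3-type moves producing the axiom pairs (iii), (viii), and (v) respectively. The paper merely compresses the classical and purely virtual cases by citation and analogy before explicitly verifying vIId, vIIr, and the mixed detour move, which is exactly where you also locate the new content.
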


\begin{proof}
The case of classical moves can be found in \cite{dsn2, MN}. The cases of
moves vI, vI$'$ and vIII are analogous to the classical cases. Hence
it remains only to consider the cases of moves vIId, vIIr and v. 
Axioms (i) and  ($vii$) together satisfy moves vIId and vIIr:
\[\includegraphics{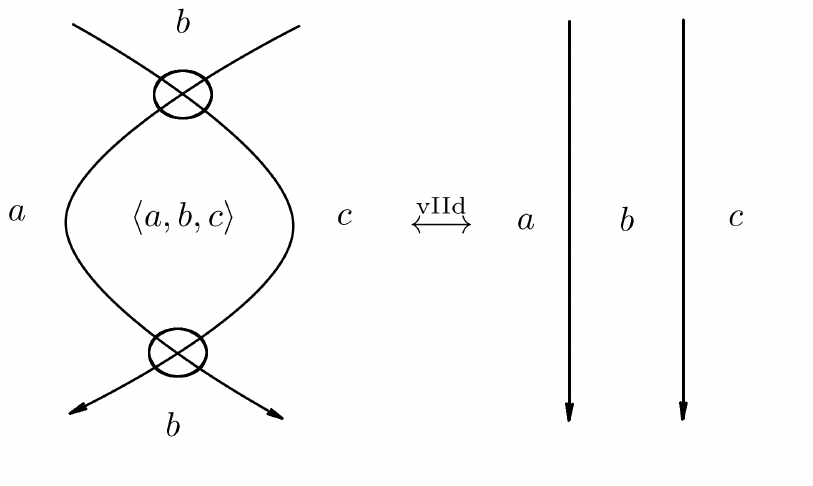}\]
\[\includegraphics{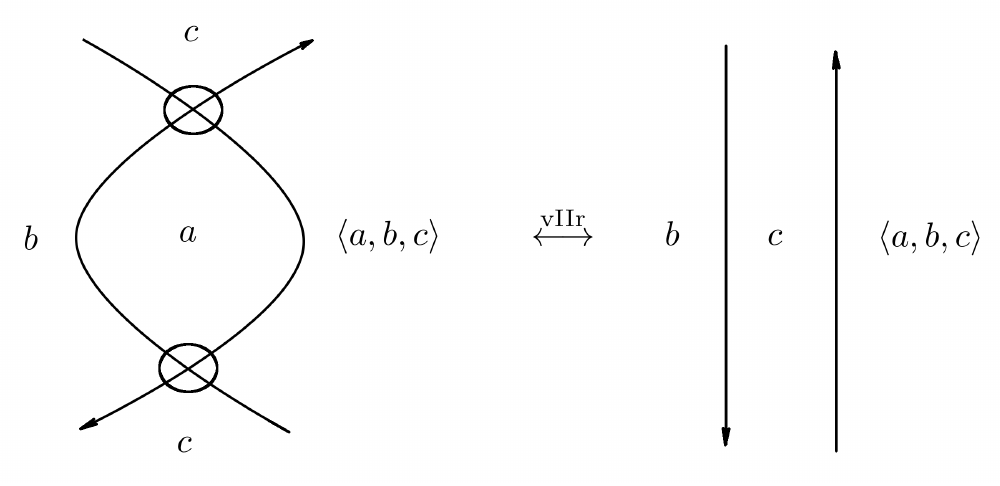}\]
while axioms ($v.i$) and $(v.ii)$ satisfy move v:
\[\includegraphics{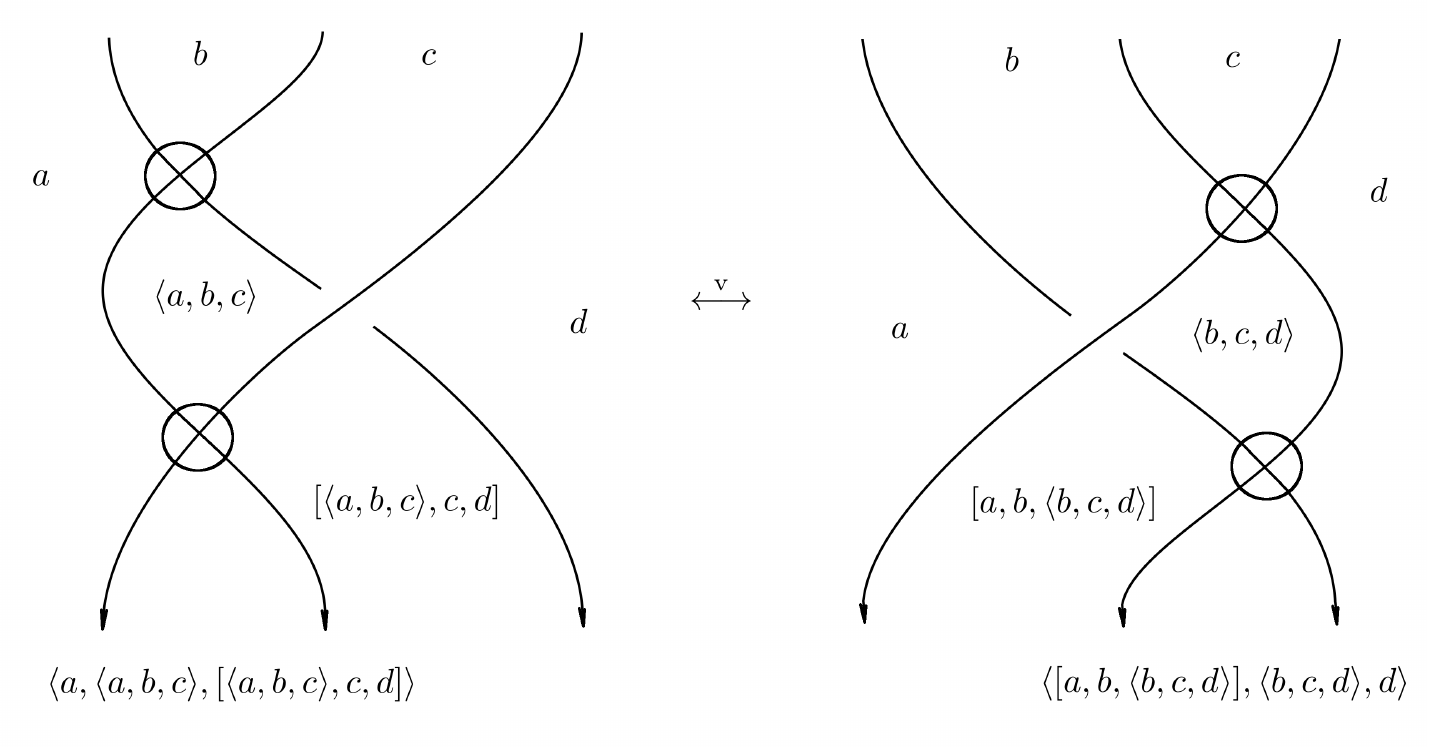}\]
\end{proof}

\begin{example}
Let $X$ be an Alexander tribracket structure on a commutative ring $R$
given by 
\[[a,b,c]=xy-xyb+yc\]
and let $v$ be a unit in $R$ satisfying $1+xy=v^{-1}x+vy$. We can 
give $X$ the structure of a virtual tribracket by setting
\[\langle a,b,c\rangle=va-b+v^{-1}c.\]
Then verifying that our definition satisfies the axioms, we have
\begin{eqnarray*}
\langle a,\langle a,b,c\rangle, c\rangle 
& = & va-(va-b+v^{-1}c)+v^{-1}c \\
& = & va-va+b-v^{-1}c+v^{-1}c \\
& = & b
\end{eqnarray*}
so axiom (ii) is satisfied. Then
\begin{eqnarray*}
\langle a,b,\langle b,c,d\rangle\rangle 
& = & va-b+v^{-1}(vb-c+v^{-1}d) \\
& = & va+0b-c+v^{-1}d \\
& = & va-(va-b+v^{-1}c)+v^{-1}(v(va-b+v^{-1}c)-c+v^{-1}d) \\
& = & \langle a,\langle b,c,d\rangle,\langle\langle b,c,d\rangle,c,d\rangle \rangle \\
\end{eqnarray*}
and
\begin{eqnarray*}
\langle\langle a,b,c\rangle,c,d\rangle 
& = & v(va-b+v^{-1}c) -c+v^{-1}d \\
& = & v^2 a-vb+0c+v^{-1}d \\
& = & v(va-b+v^{-1}(vb-c+v^{-1}d))-(vb-c+v^{-1}d)+v^{-1}d \\
& = & \langle \langle a,b,\langle b,c,d \rangle\rangle,\langle b,c,d\rangle,d\rangle \\
\end{eqnarray*}
so axioms (viii.i) and (viii.ii) are satisfied. Finally, we check axioms 
(v.i) and (v.ii), keeping in mind that $1+xy=v^{-1}x+vy$:
\begin{eqnarray*}
[a,b,\langle b,c,d\rangle] 
& = & xa-xyb+y(vb-c+v^{-1}d) \\
& = & (v-v+x)a+(1-xv^{-1})b+(-v^{-1}+xv^{-2}-xyv^{-1})c+(v^{-1}y)d \\
& = & va-(va-b+v^{-1}c)+v^{-1}(x(va-b+v^{-1}c)-xy c+yd) \\
& = & \langle a, \langle a,b,c\rangle, [\langle a,b,c\rangle,c,d]\rangle \\
\end{eqnarray*}
and 
\begin{eqnarray*}
[\langle a,b,c\rangle,c,d] 
& = & x(va-b+v^{-1}c)-xyc+yd  \\
& = & (xv)a+(-xyv+yv^2-v)b+(1-yv)c+(v^{-1}+y-v^{-1})d \\
& = & v(xa-xyb+y(vb-c+v^{-1}d))-(vb-c+v^{-1}d)+v^{-1}d \\
& = & \langle [a,b,\langle b,c,d\rangle],\langle b,c,d\rangle, d\rangle \\
\end{eqnarray*}
and axioms (v.i) and (v.ii) are satisfied. 
\end{example}

We can define virtual tribracket structures on finite sets without
formulas by encoding the operation tables of the ternary operations
$[,,]$ and $\langle,,\rangle$
as 3-tensors, i.e., as $n$-component vectors of $n\times n$
matrices. More precisely, let $X=\{1,2,3,\dots, n\}$. Then
we will specify a virtual tribracket structure on $X$
by giving two ordered $n$ component lists of $n\times n$ matrices with
the property that to find $[a,b,c]$ we look at the first list,
find the $a$th matrix, and look up the entry in row $b$ column $c$;
to find $\langle a,b,c\rangle$, we do the same with the second list.

\begin{example}
The pair of 3-tensors below defines a virtual tribracket structure on
the set $X=\{1,2,3\}$:
\[\left[
\left[\begin{array}{rrr} 2 & 1 & 3\\ 3 & 2 & 1\\ 1 & 3 & 2\\\end{array}\right], 
\left[\begin{array}{rrr} 3 & 2 & 1\\ 1 & 3 & 2\\ 2 & 1 & 3\\\end{array}\right],
\left[\begin{array}{rrr} 1 & 3 & 2\\ 2 & 1 & 3\\ 3 & 2 & 1\\\end{array}\right]\right],\left[
\left[\begin{array}{rrr} 1 & 2 & 3\\ 3 & 1 & 2\\ 2 & 3 & 1\\\end{array}\right],
\left[\begin{array}{rrr} 2 & 3 & 1\\ 1 & 2 & 3\\ 3 & 1 & 2\\\end{array}\right],
\left[\begin{array}{rrr} 3 & 1 & 2\\ 2 & 3 & 1\\ 1 & 2 & 3\\\end{array}\right]\right]
\]
Then we have for instance $[1,3,2]=3$ and $\langle 2,3,3\rangle=2$.
\end{example}

\section{\Large\textbf{Invariants from Virtual Tribrackets}}\label{VK}

We begin this section with a corollary of theorem \ref{main}.

\begin{corollary}
The number of colorings of an oriented virtual link diagram by a virtual 
tribracket $X$ is an integer-valued invariant of virtual links.
\end{corollary}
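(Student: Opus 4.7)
The plan is to derive the corollary directly from Theorem \ref{main} by the standard coloring-invariant template. First I would recall that two oriented virtual link diagrams represent the same virtual link if and only if they are related by a finite sequence of planar isotopies together with oriented classical and virtual Reidemeister moves drawn from the generating set fixed earlier in Section \ref{VT}. Planar isotopies visibly preserve the set of virtual tribracket colorings, since the combinatorial data (regions and their incidences with crossings) is unchanged, so it suffices to analyze one Reidemeister move at a time.

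Next I would invoke Theorem \ref{main} to produce, for each move in the generating set, a map from the set of $X$-colorings of the pre-move diagram to the set of $X$-colorings of the post-move diagram: the theorem provides a unique extension of the given coloring to the modified diagram that agrees outside a neighborhood of the move. Because every move in our list has its inverse also in the list (or is obtained from one by reading it backwards, which is again covered by Theorem \ref{main} applied in the reverse direction), the same construction produces a two-sided inverse, and the map is a bijection. Composing these bijections along any finite sequence of moves relating two diagrams $D_1$ and $D_2$ of the same virtual link yields a bijection between the full set of $X$-colorings of $D_1$ and of $D_2$, so the cardinalities agree.

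Finally, integer-valuedness and finiteness follow at once: for a finite virtual tribracket $X$ and a diagram with $r$ planar regions, there are at most $|X|^r$ candidate colorings, so the count is a well-defined nonnegative integer. The only genuine subtlety I anticipate is bookkeeping: confirming that the ``agrees outside the neighborhood of the move'' clause in Theorem \ref{main} really does allow us to paste local bijections into a global one. This is routine but worth stating explicitly, since it is exactly the point where one uses that a coloring of a diagram is determined by its values on the (finitely many) regions and that a Reidemeister move alters only finitely many of these regions, leaving the rest untouched. No new calculation involving the axioms is needed beyond what Theorem \ref{main} has already supplied.
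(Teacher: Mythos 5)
Your argument is correct and is exactly the reasoning the paper intends: the corollary is stated as an immediate consequence of Theorem \ref{main}, whose move-by-move bijections of colorings compose to give equal coloring counts for any two diagrams of the same virtual link. The paper gives no further proof, so your explicit write-up (including the pasting of local bijections into a global one) matches and slightly elaborates the paper's implicit argument.
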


\begin{definition}
We will denote the number of $X$-colorings of an oriented virtual link
diagram $D$ representing an oriented virtual link $K$ for a virtual
tribracket $X$ by $\Phi^{\mathbb{Z}}_X(K)$.
\end{definition}

\begin{example}
The trivial value of $\Phi^{\mathbb{Z}}_X(K)$ on an oriented 
virtual link of $n$ components is $|X|^{n+1}$ since an unlink of $n$
components with no crossings divides the plane into $n+1$ regions with
no relations between the colors.
\end{example}

\begin{example}
Let $X=\mathbb{Z}_3$ and set $x=1, y=2$ and $v=2$. Then we observe that
\[1+xy=1+2=0=1(2)+2(2)=xv^{-1}+yv\]
and we have a virtual Alexander tribracket with
\begin{eqnarray*}
[a,b,c] & = & \langle a,b,c\rangle = a +b+2c\\
\langle a,b,c\rangle & = & 2a+2b+2c.
\end{eqnarray*} 
Let us compute the set of $X$-colorings of the virtual knot 3.7 below.
\[\includegraphics{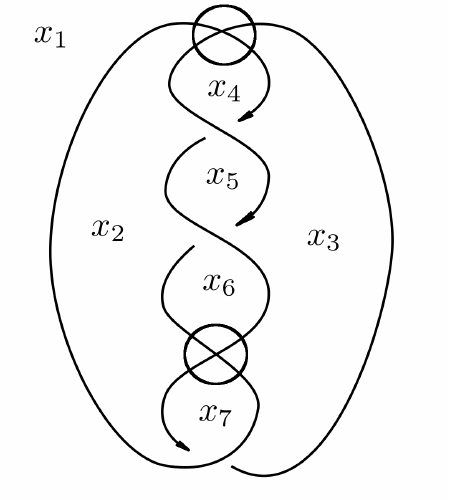}\]
We obtain a homogeneous system of linear equations over $\mathbb{Z}_3$ 
with coefficient matrix
\[\left[\begin{array}{rrrrrrr}
2 & 2 & 2 & 2 & 0 & 0 & 0 \\
0 & 1 & 2 & 2 & 1 & 0 & 0 \\
0 & 1 & 2 & 0 & 2 & 1 & 0 \\
0 & 2 & 2 & 0 & 0 & 2 & 2 \\
2 & 1 & 2 & 0 & 0 & 0 & 1
\end{array}\right]
\stackrel{\mathrm{row\ moves\ over\ }\mathbb{Z}_3}{\longleftrightarrow}
\left[\begin{array}{rrrrrrr}
1 & 1 & 1 & 1 & 0 & 0 & 0 \\
0 & 1 & 2 & 2 & 1 & 0 & 0 \\
0 & 0 & 1 & 2 & 1 & 2 & 2 \\
0 & 0 & 0 & 1 & 1 & 1 & 0 \\
0 & 0 & 0 & 0 & 0 & 0 & 0 \\
\end{array}\right]\]
so the kernel is 3-dimensional and there are $\Phi^{\mathbb{Z}}_X(3.7)=3^3=27$
colorings. This distinguishes this virtual knot from the unknot, which has 
$\Phi^{\mathbb{Z}}_X(0_1)=2^2=9$ colorings.
\end{example}

\begin{example}
Unlike the \textit{a priori} similar case of quandle colorings, 
$\Phi^{\mathbb{Z}}_X(L)$ need not be nonzero. For example, the Hopf link
\[\includegraphics{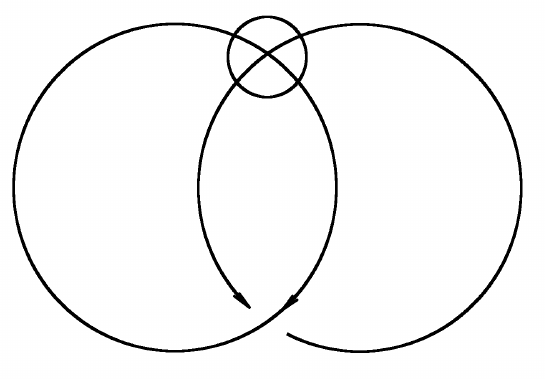}\]
has no colorings by the virtual tribracket with operation tensors
\[
\left[\left[
\begin{array}{rrr}
1& 2& 3\\
3& 1& 2\\
2& 3& 1
\end{array}\right],\left[\begin{array}{rrr}
2& 3& 1\\
1& 2& 3\\
3& 1& 2
\end{array}\right],\left[\begin{array}{rrr}
3& 1& 2\\
2& 3& 1\\
1& 2& 3
\end{array}\right]\right],
\left[\left[\begin{array}{rrr}
2& 3& 1\\
1& 2& 3\\
3& 1& 2
\end{array}\right],\left[\begin{array}{rrr}
3& 1& 2\\
2& 3& 1 \\
1& 2& 3
\end{array}\right],\left[\begin{array}{rrr}
1& 2& 3\\
3& 1& 2 \\
2& 3& 1
\end{array}\right]\right].
\]
This result distinguishes this virtual link from the unlink, which has
$3^3=27$ colorings by $X$.
\end{example}

\begin{example}
We computed $\Phi_X^{\mathbb{Z}}(K)$ for the prime virtual links with up to seven 
crossings in the table in \cite{KA} using several virtual tribracket 
structures with our \texttt{python} code. The virtual tribracket given the 
by 
\[
\left[
\left[\begin{array}{rrr}
1& 2& 3\\
2& 3& 1\\
3& 1& 2\end{array}\right],
\left[\begin{array}{rrr}
3& 1& 2\\
1& 2& 3\\
2& 3& 1\end{array}\right],
\left[\begin{array}{rrr}
2& 3& 1\\
3& 1& 2\\
1& 2& 3\end{array}\right]\right],\left[
\left[\begin{array}{rrr}
3& 2& 1\\
2& 1& 3\\
1& 3& 2\end{array}\right],
\left[\begin{array}{rrr}
2& 1& 3\\
1& 3& 2\\
3& 2& 1\end{array}\right],
\left[\begin{array}{rrr}
1& 3& 2\\
3& 2& 1\\
2& 1& 3\end{array}\right]\right].
\]
distinguishes the virtual knots $3.6, 3.7,4.69,4.70,4.71,4.72,4.73,4.74,4.75,4.76, 4.77, 4.98$ and $4.99$ with $27$ colorings from the unknot with $9$ 
colorings , while the the virtual tribracket given by
\[
\left[
\left[\begin{array}{rrrr}
3& 1& 2& 4\\
1& 3& 4& 2\\
2& 4& 3& 1\\
4& 2& 1& 3
\end{array}\right],
\left[\begin{array}{rrrr}
4& 2& 1& 3\\
2& 4& 3& 1\\
1& 3& 4& 2\\
3& 1& 2& 4
\end{array}\right],
\left[\begin{array}{rrrr}
1& 3& 4& 2\\
3& 1& 2& 4\\
4& 2& 1& 3\\
2& 4& 3& 1
\end{array}\right],
\left[\begin{array}{rrrr}
2& 4& 3& 1\\
4& 2& 1& 3\\
3& 1& 2& 4\\
1& 3& 4& 2
\end{array}\right]\right],\]\[
\left[
\left[\begin{array}{rrrr}
1& 2& 3& 4\\
2& 1& 4& 3\\
3& 4& 1& 2\\
4& 3& 2& 1
\end{array}\right],
\left[\begin{array}{rrrr}
2& 1& 4& 3\\
1& 2& 3& 4\\
4& 3& 2& 1\\
3& 4& 1& 2
\end{array}\right],
\left[\begin{array}{rrrr}
3& 4& 1& 2\\
4& 3& 2& 1\\
1& 2& 3& 4\\
2& 1& 4& 3
\end{array}\right],
\left[\begin{array}{rrrr}
4& 3& 2& 1\\
3& 4& 1& 2\\
2& 1& 4& 3\\
1& 2& 3& 4
\end{array}\right]\right]
\]
distinguishes the virtual knots $3.6,4.65,4.69,4.98,4.102,4.104$ and $4.108$
with $64$ colorings from the unknot with $16$ colorings.
\end{example}

\begin{example}
The counting virtual tribracket counting invariant is sensitive to orientation 
reversal. Consider the two virtual links $L$ and $L'$ which differ only in 
the orentation of one component:
\[\includegraphics{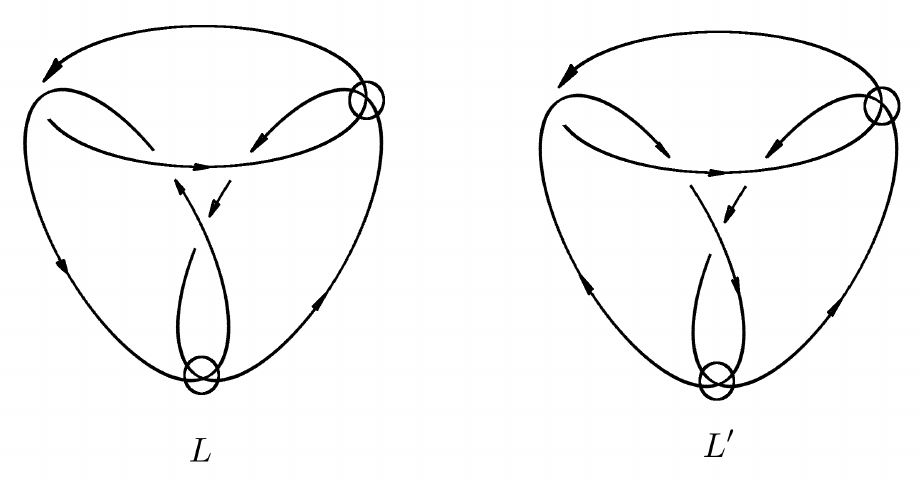}\]
Our \texttt{python} computations indicate that the virtual tribracket
structure on $X=\{1,2,3,4\}$ defined by the operation tensor
\[
\left[\left[\begin{array}{rrrr} 
1 & 3 & 4 & 2 \\4 & 2 & 1 & 3  \\ 2 & 4 & 3 & 1 \\ 3 & 1 & 2 & 4
\end{array}\right],\left[\begin{array}{rrrr}
3 & 1 & 2 & 4 \\2 & 4 & 3 & 1 \\4 & 2 & 1 & 3 \\1 & 3 & 4 & 2
\end{array}\right],\left[\begin{array}{rrrr}
4 & 2 & 1 & 3 \\1 & 3 & 4 & 2 \\3 & 1 & 2 & 4 \\2 & 4 & 3 & 1
\end{array}\right],\left[\begin{array}{rrrr}
2 & 4 & 3 & 1 \\3 & 1 & 2 & 4 \\1 & 3 & 4 & 2 \\4 & 2 & 1 & 3,
\end{array}\right]\right],\]\[
\left[\left[\begin{array}{rrrr}
1 & 4 & 2 & 3 \\2 & 3 & 1 & 4 \\3 & 2 & 4 & 1 \\4 & 1 & 3 & 2
\end{array}\right],\left[\begin{array}{rrrr}
3 & 2 & 4 & 1 \\4 & 1 & 3 & 2 \\1 & 4 & 2 & 3 \\2 & 3 & 1 & 4
\end{array}\right],\left[\begin{array}{rrrr}
4 & 1 & 3 & 2 \\3 & 2 & 4 & 1 \\2 & 3 & 1 & 4 \\1 & 4 & 2 & 3
\end{array}\right],\left[\begin{array}{rrrr}
2 & 3 & 1 & 4 \\1 & 4 & 2 & 3 \\4 & 1 & 3 & 2 \\3 & 2 & 4 & 1
\end{array}\right]\right]
\]
distinguishes the links with counting invariant values of
$\Phi_X^{\mathbb{Z}}(L)=16\ne \Phi_X^{\mathbb{Z}}(L')=64.$
\end{example}

\section{Questions}\label{Q}

As with all counting invariants, we can ask about enhancements. In \cite{cnn}
and \cite{MN2}, Boltzmann weights are used to enhance the biquasile counting
invariant. A scheme similar to that used in \cite{CN} could be applied 
similarly here with different Boltzmann weights at the classical and virtual
crossings to obtain a two-variable polynomial enhancements of the virtual 
tribracket counting enhancement.

Another avenue of future research could be using Alexander virtual tribrackets
to define an analog of the Sawollek polynomial coming for the virtual tribracket
as opposed to from the virtual biquandle as in \cite{S}. 
Are these polynomials the same?

\bibliography{sn-sp2}{}
\bibliographystyle{abbrv}

\bigskip

\noindent
\textsc{Department of Mathematical Sciences \\
Claremont McKenna College \\
850 Columbia Ave. \\
Claremont, CA 91711}

\end{document}